\renewcommand{\abstract}{\textbf{Abstract}}
\newcommand{\I}{\mathrm{i}}
\renewcommand{\d}{{\rm d}}
\newtheorem{satz}{Satz}[subsection]
\newtheorem{theorem}[satz]{\textit{Theorem}}
\newtheorem{proposition}[satz]{\textit{Proposition}}
\newtheorem{korollar}[satz]{\textit{Corollary}}
\newtheorem{definition}[satz]{\textit{Definition}}
\numberwithin{equation}{section}
\date{}
\title{\textbf{\textit{Some notes on Euler products}}\vspace{-0.4cm}}
\author{\textit{Johannes L\"offler}}
\begin{document}
\maketitle
\begin{abstract} 
We focus on a well-known convergence phenomenon, the fact that the $\zeta$ zeros are the universal singularities of certain Euler products. \end{abstract}

\subsubsection*{\textit{Introduction to the Riemann zeta $\zeta$ function}}
The most interesting aspects of the $\zeta$ function may be its connection to prime numbers and its in some sense chaotic universality discovered by Voronin \cite{Vo} relying on highly non-trivial convergence analysis. The connection of $\zeta$ to number theory is due to Euler's famous formula
\begin{align}\label{ZEP}
\zeta(s):=\prod_{p\in\mathbb{P}}\frac{1}{1-p^{-s}}=\sum_{n=1}^{\infty}n^{-s}
\end{align}
where $p$ runs through the set $\mathbb{P}$ of primes. Here we used the fundamental theorem of arithmetic to convert the product over primes of geometric series into a sum over the positive integers: In words the $\zeta$ function encodes the connection of the natural numbers to the multiplication atoms known as primes. The previous formula \ref{ZEP} only holds for $s$ with real part $>1$, but the analytic continuation of $\zeta$ in a natural way gives sense to diverging expressions like Ramanujan's summation
$1+2+3+4+5+\dots=\zeta(-1)=-1/12$
or the also very weird seeming, but useful summation
$1+1+1+1+1+\dots=\zeta(0)=-1/2$
informally corresponding to the trace of the identity matrix in infinite dimensions, a regularization used in field theories.

The $\zeta$ and multiple $\zeta$ values appear in quantization procedures and other topics, the article \cite{ZA} contains a summary of some interesting aspects of $\zeta$ functions and special values.

Riemann proved \cite{RI} that the continuation of $\zeta$ has so-called trivial zeros at the negative even integers $-2,-4,-6,-8,-10,\cdots$ and conjectured  that all non-trivial zeros of $\zeta$ have real part ${1}/{2}$. Maybe the main evidences for him are the functional $\zeta(1-s)=\zeta(s)2\cos\left(\frac{\pi{s}}{2}\right)\Gamma(s)(2\pi)^{-s}$, his numerical calculations and the fact that the functions $1/(1-p^{-s})$ have singularities at the quite wild distributed points ${s=\I2\pi{n}/\ln(p),\;n\in\mathbb{Z}}$
hence all the singularities of all factors of the Euler product \ref{ZEP} are on the line $\I\mathbb{R}$. The intuition of the conjecture is that at least the geometric structure of the zeros remains a vertical line, but by analytic continuation the line gets somehow shifted by $1/2$ to the right and the previous complicated distribution
modified by an unknown mechanism. Knauf \cite{K} reasoned on a connection to a famous theorem of Lee and Yang \cite{LeeYang} in statistical physics where certain partition functions of ferromagnetic systems admit only real zeros. There are many other statements that have been proven to be indeed equivalent to Riemann's original conjecture and it has nice consequences as a hypothesis, called RH, including the best possible bound for the error of the prime number theorem \cite{Ko}. It is widely hoped that a proof of Riemann's conjecture should with some technical difficulties at the end yield a proof of the generalized Riemann conjecture for a certain class of functions called $L$-functions. We list some of the major results, speculations on RH:
\begin{itemize} 
\item A classical result of Hadamard \cite{HA} and Poussin \cite{POU}, based on
$0\leq3+4\cos(\alpha)+\cos(2\alpha)\forall\alpha\in\mathbb{R}$
contradicts zeros on the boundary $\Re(s)=\pm1$ of the critical strip.
\item Hardy and Littlewood \cite{Ha} proved that there are infinitely many non trivial zeros on the critical line, for an alternative proof see \cite{PO}. Bohr and Landau showed \cite{BL} that in some sense most of the non trivial zeros are located to the critical line. Conrey significantly improved old results and showed \cite{Con} that a positive percentage greater than $2/5$ of all zeros is on the critical line.
\item Choie, Lichiardopol, Sole and Moree proved in \cite{Moree} that certain conditions of an equivalent, but in some sense more elementary statement called Robin's criteria \cite{Robin} are satisfied.
\item Zagier showed that the existence of a certain inner product would imply the Riemann conjecture, we refer the reader for the details to the article \cite{ZA2}.
\end{itemize}

\textbf{Acknowledgements:} I am deeply grateful to P. Moree for his interest in my research, useful comments and discussions. I thank S. Bhattacharya, G. Bhowmik, D. Essouabri, H. Furusho, O. Ramar\'e and S. Saad-Eddin for reading draft versions, discussions of my notes and pointing out references. Last but not least I thank the MPIM \textsc{Max-Planck-Institute For Mathematics} in Bonn. johannes@mpim-bonn.mpg.de

\section{\textit{Euler products and universal zeros}}\label{NonZero}
The idea in the following is to cancel the problematic divergent terms to get a converging Euler product, we do not claim originality and just want to give the reader a simple introduction in the topic. The statements here have an analogy to results of Estermann, Dahlquist, Kurokawa and for example the articles \cite{BMa} and \cite{BEL} are interesting for readers who want to go further. Theorem \ref{Uni} implies that the $\zeta$ zeros and singularities are the universal singularities of a certain class of Euler products, because it allows to construct analytic continuations for certain Euler products, like the well-known example
$\prod_{p}{1}/({1+p^{-s}})=\zeta(2s)/\zeta(s)$
based on the binomial theorem
$(\alpha+\beta)(\alpha-\beta)=\alpha^2-\beta^2$
or the corollary \ref{Universality}.
In this example $\zeta(2s)$ is the converging Euler product. It might be worth to mention that $\prod_{p}{1}/({1+p^{-s}})$ at integers relates the well-known $\zeta$ values at even integers with the mysterious $\zeta$ values at the odd integers. The theorem \ref{Uni} is folklore and should be in some generalized version well-known
\begin{theorem}\label{Uni}
Consider a sequence $g_n$ of on the unit disc $\mathbb{D}_1(0)$ holomorphic functions with $g_n(x)\neq1$ for $x\in\mathbb{D}_1(0)$ and with the equicontinuity property that there exists $C,\delta,\lambda>0$ so that
$\vert{x}\vert\leq\delta\Rightarrow\vert{g_n}(x)\vert\leq{C}\vert{x}\vert^\lambda$.
Let $\{a_n\}_{n\in\mathbb{N}^+}$ be a sequence with $a_{n+1}>a_n\in\mathbb{N}^+$. The Euler product
\begin{equation}\label{BIGPRO}
\prod_{n=1}^{\infty}\frac{1}{1-g_n(a_n^{-s})}
\end{equation}
converges absolutely for $s\in\mathbb{C}$ with $\Re(s)>1/\lambda$ to a value $\in\mathbb{C}\setminus\{0,\infty\}$.
\end{theorem}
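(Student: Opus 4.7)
The plan is to reduce absolute convergence of the product to absolute convergence of the logarithmic series $\sum_n \log\bigl(1/(1-g_n(a_n^{-s}))\bigr)$, and then to use the two given bounds (the hypothesis on $g_n$ near zero and the growth of $a_n$) to compare this series with a convergent $p$-series.

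First I would exploit that $a_{n+1}>a_n$ with $a_n\in\mathbb{N}^+$, which by induction gives $a_n\geq n$; hence $|a_n^{-s}|=a_n^{-\Re(s)}\leq n^{-\Re(s)}$. For $\Re(s)>1/\lambda>0$ this tends to zero, so there is an index $N=N(s,\delta)$ such that $|a_n^{-s}|\leq\delta$ for all $n\geq N$. For those $n$ the equicontinuity hypothesis gives
\[
\vert g_n(a_n^{-s})\vert\leq C\,\vert a_n^{-s}\vert^{\lambda}=C\,a_n^{-\lambda\Re(s)}\leq C\,n^{-\lambda\Re(s)}.
\]
Enlarging $N$ if necessary I may also assume $|g_n(a_n^{-s})|\leq 1/2$ for $n\geq N$.

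Next I would apply the elementary estimate $|\log(1-z)|\leq 2|z|$, valid for $|z|\leq 1/2$, to the principal branch of the logarithm; this yields
\[
\sum_{n\geq N}\left|\log\frac{1}{1-g_n(a_n^{-s})}\right|\leq 2C\sum_{n\geq N}n^{-\lambda\Re(s)},
\]
and the right-hand side is finite because $\lambda\Re(s)>1$. Thus the tail series of logarithms converges absolutely. Summing with the finitely many initial terms (each of which is a finite complex number since $g_n(a_n^{-s})\neq 1$ by the hypothesis that $g_n$ avoids the value $1$ on $\mathbb{D}_1(0)$, noting that $a_n^{-s}\in\mathbb{D}_1(0)$ for $\Re(s)>0$ and $a_n\geq 2$) gives absolute convergence of the full log series, uniformly on compact subsets of the half-plane $\Re(s)>1/\lambda$. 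Exponentiating, the Euler product converges absolutely, and since the exponential never vanishes and every finite partial product is nonzero, the limit lies in $\mathbb{C}\setminus\{0,\infty\}$.

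I do not anticipate a genuine obstacle; the only point requiring care is the joint use of the two hypotheses to produce a uniform-in-$n$ decay, and the verification that $a_n^{-s}$ eventually enters the disc $\{|x|\leq\delta\}$ where the power-law bound is applicable. Everything else is standard bookkeeping for Euler products of non-vanishing factors.
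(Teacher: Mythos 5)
Your proposal is correct and follows essentially the same route as the paper: reduce absolute convergence of the product to absolute convergence of $\sum_n\log\bigl(1-g_n(a_n^{-s})\bigr)$, use a two-sided linear estimate for $\log(1-z)$ when $|z|$ is small, invoke the power-law hypothesis $|g_n(x)|\leq C|x|^\lambda$ to get decay of order $n^{-\lambda\Re(s)}$, and compare with a convergent $p$-series for $\lambda\Re(s)>1$. The only difference is that you make explicit the inequality $a_n\geq n$ and the choice of cutoff index $N$, which the paper's terser proof leaves implicit.
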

\begin{proof}[Proof] It is standard that \ref{BIGPRO} converges absolutely if $\sum_{n\in\mathbb{N}}\ln\bigr(1-g_n(a_n^{-s})\bigr)$ converges absolute. Clearly a finite number of non-singular values does not influence convergence and we can argue as usual: The well-known series representation $\ln(1-x)=-\sum_{n=1}^{\infty}x^n/n$ and for small $x$ the estimation
$\vert{x}\vert/2<\vert\ln(1-x)\vert<3\vert{x}\vert/2$
holds. Now the equicontinuity assumption that there exists some real numbers $C,\delta,\lambda>0$ so that $\vert{x}\vert\leq\delta\Rightarrow\vert{g_n}(x)\vert\leq{C}\vert{x}\vert^\lambda$ combined with the convergence of
$\sum_{n\in\mathbb{N}}1/n^{1+\epsilon}=\zeta(1+\epsilon)
$
for all $\epsilon\in\mathbb{R}^+$
imply that
$\sum_{n\in\mathbb{N}}\big|\ln\bigr(1-g_n(a_n^{-s})\bigr)\big|$
converges absolutely for $\Re(s)>1/\lambda$. In fact we do not have to restrict the numbers $a_n$ in the sequence $\{a_n\}_{n\in\mathbb{N}^+}$ to be integers, \ref{BIGPRO} also converges if we suppose $a_{n+1}>a_n\geq1\in\mathbb{R}$ and absolute convergence of
$\sum_{n\in\mathbb{N}}1/\vert{a_n}\vert^{1+\epsilon}$.
Under this assumptions the inverse of \ref{BIGPRO} also converges if we drop the restriction $g_n(x)\neq1$ but in this case possibly to zero, we thank O. Ramar\'e and P. Moore for pointing this out.\end{proof}
\begin{korollar}\label{Universality}
Let $\{\mathbf{l}_n\}_{n\in\mathbb{N}}$ be a sequence with $\mathbf{l}_n\in[-1,1]$, $\{a_n\}_{n\in\mathbb{N}^+}$ be a sequence with $a_{n+1}>a_n\in\mathbb{N}^+$ and
$f(x)=1-\lambda_1x-\sum_{i=m}^{\infty}{\lambda_i}x^i$
with $2\leq{m}\in\mathbb{N}$ a holomorphic power series that converges for $x\in\mathbb{D}_1(0)$ to a non zero value. The Euler product
$$\prod_{n=1}^{\infty}\frac{f\bigr(\mathbf{l}_na_n^{-s}\bigr)}{1-\lambda_1\mathbf{l}_na_n^{-s}}$$
converges absolutely for $s\in\mathbb{C}$ with $\Re(s)>1/m$ to a value $\in\mathbb{C}\setminus\{0,\infty\}$. Hence if $\Re(s)>1/2$ and $f(x)=1-x-\sum_{i=m}^{\infty}{\lambda_i}x^i$ then $\zeta(z)=0$ implies $\prod_{p\in\mathbb{P}}1/f\left(p^{-s}\right)=0$ with the same multiplicity.\end{korollar}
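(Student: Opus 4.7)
The plan is to reduce the corollary to Theorem \ref{Uni} by bringing each factor into the shape $1/(1-g_n(a_n^{-s}))$. A short algebraic manipulation using $f(y)=(1-\lambda_1 y)-\sum_{i\geq m}\lambda_i y^i$ yields
\[
\frac{f(\mathbf{l}_n x)}{1-\lambda_1\mathbf{l}_n x}=\frac{1}{1-g_n(x)},\qquad g_n(x):=\frac{-\sum_{i\geq m}\lambda_i\mathbf{l}_n^{\,i}x^i}{f(\mathbf{l}_n x)},
\]
so the product in the corollary rewrites as $\prod_n 1/(1-g_n(a_n^{-s}))$. Once I check that the $g_n$ satisfy the equicontinuity hypothesis of Theorem \ref{Uni} with exponent $\lambda=m$, absolute convergence on $\Re(s)>1/m$ to a value in $\mathbb{C}\setminus\{0,\infty\}$ follows at once.

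Next I would verify the equicontinuity uniformly in $n$. Since $f$ is holomorphic and non-vanishing on $\mathbb{D}_1(0)$ and $|\mathbf{l}_n|\leq 1$, compactness on $\overline{\mathbb{D}_{1/2}(0)}$ gives a uniform lower bound $|f(\mathbf{l}_n x)|\geq c>0$ for all $n$ and all $|x|\leq 1/2$. For the numerator,
\[
\Bigl|\sum_{i\geq m}\lambda_i\mathbf{l}_n^{\,i}x^i\Bigr|\;\leq\;|x|^m\sum_{i\geq m}|\lambda_i|\,(1/2)^{i-m},
\]
and the right-hand sum is finite because the series defining $f$ converges on $\mathbb{D}_1(0)$. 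Combining these bounds gives $|g_n(x)|\leq C|x|^m$ uniformly in $n$ on $|x|\leq 1/2$, exactly the hypothesis needed. The condition $g_n(x)\neq 1$ on the whole disc that appears in Theorem \ref{Uni} is used only to make each factor well-defined; it may fail for finitely many $n$, but the argument $a_n^{-s}$ becomes arbitrarily small, so this is harmless for convergence.

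For the universality claim I would specialise $\mathbf{l}_n=1$, $\lambda_1=1$ and $a_n=p_n$, the $n$-th prime. Absolute convergence of the ordinary Euler product for $\Re(s)>1$ lets me factor
\[
\prod_{p\in\mathbb{P}}\frac{1}{f(p^{-s})}\;=\;\zeta(s)\cdot\frac{1}{P(s)},\qquad P(s):=\prod_{p\in\mathbb{P}}\frac{f(p^{-s})}{1-p^{-s}}.
\]
Because $m\geq 2$ we have $1/m\leq 1/2$, so the first part of the corollary asserts that $P$ is holomorphic and nowhere zero on $\Re(s)>1/2$. Consequently $\zeta(s)/P(s)$ is the analytic continuation of the formal Euler product $\prod_p 1/f(p^{-s})$ to that half-plane, and its zero set on $\Re(s)>1/2$ coincides with that of $\zeta$ with identical multiplicities.

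The one genuinely technical step is the equicontinuity estimate: everything else is a bookkeeping identity and a reference to Theorem \ref{Uni}. The subtlety there is making sure the constants $c,C$ are independent of $n$, which boils down to the boundedness of $\{\mathbf{l}_n\}$ in $[-1,1]$ together with the convergence of the coefficient tail $\sum_{i\geq m}|\lambda_i|(1/2)^{i-m}$ inherited from $f$ being holomorphic on $\mathbb{D}_1(0)$.
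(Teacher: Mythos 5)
Your proposal is correct and follows essentially the same route as the paper: rewrite each factor so Theorem \ref{Uni} applies, verify the uniform $|g_n(x)|\leq C|x|^m$ bound near the origin, and then specialise $\lambda_1=1$, $\mathbf{l}_n=1$, $a_n=p_n$ to transfer zeros of $\zeta$ to the Euler product $\prod_p 1/f(p^{-s})$ via the nonvanishing holomorphic correction factor $P(s)$ on $\Re(s)>1/2$. The only cosmetic difference is the algebraic normalisation: the paper writes the factor as $1-g_n(a_n^{-s})$ with $g_n(x)=\sum_{i\geq m}\lambda_i x^i/(1-\lambda_1 x)$ and implicitly invokes the closing remark of the proof of Theorem \ref{Uni} about the reciprocal product, whereas you choose $g_n(x)=-\sum_{i\geq m}\lambda_i\mathbf{l}_n^i x^i/f(\mathbf{l}_n x)$ so that the factor is literally $1/(1-g_n)$; your equicontinuity check (compactness lower bound on $|f|$ plus the tail estimate $\sum_{i\geq m}|\lambda_i|(1/2)^{i-m}<\infty$) is actually spelled out more carefully than the paper's terse $|\lambda_m+\epsilon|$ estimate.
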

\begin{proof}[Proof] Let us restrict to $l_p=1$, there are no essential changes in the more general case: First 
$\frac{f(x)}{1-\lambda_1{x}}=1-\frac{\sum_{i=m}^{\infty}{\lambda_i}x^{i}}{1-\lambda_1{x}}$
is a Taylor series starting with $1$ and with in some sense lowest degree $m$
and by the assumptions $\Re(s)>0$ and $f$ being holomorphic we have that if $p$ is large enough we can find $\epsilon>0$ to estimate
$\Big|\frac{\sum_{i=m}^{\infty}{\lambda_i}a_n^{-is}}{1-\lambda_1a_n^{-s}}\Big|\leq\vert{a_n}^{-m\Re{(s)}}\vert\vert\lambda_m+\epsilon\vert$
Now we can just use the previous theorem \ref{Uni} to conclude corollary \ref{Universality}.\end{proof}
\begin{definition}
Let $\{\mathbf{l}_n\}_{n\in\mathbb{N}}$ be a sequence with $\mathbf{l}_n\in[-1,1]$ and $\mathbb{A}$ be a subset $\mathbb{A}=\{a_n\}_{n\in\mathbb{N}}$ of $\mathbb{N}^+$ with $a_{n+1}>a_n$. We define the associated Dirichlet series $\mathcal{D}^{\mathbf{l}_n}_\mathbb{A}$ by
$$\mathcal{D}^{\mathbf{l}_n}_\mathbb{A}(s):=\sum_{n=1}^\infty\frac{\mathbf{l}_n}{a_n^s}$$
If $\mathbf{l}_n=1$ we write
$\mathcal{D}_\mathbb{A}(s):=\sum_{a\in\mathbb{A}}\frac{1}{a^s}$
with respect to the natural order on $\mathbb{A}$ induced by the order of $\mathbb{N}$ and whenever this Dirichlet series converges.

If $1\notin\mathbb{A}$ we denote by $\zeta^{\mathbf{l}_n}_{\mathbb{A}}$ whenever it makes sense the Euler product
$$\zeta^{\mathbf{l}_n}_{\mathbb{A}}(s):=\prod_{n=1}^\infty\frac{1}{1-{\mathbf{l}_n}a_n^{-s}}$$
If $\mathbf{l}_n=1$ we again use the shorthand notation
$\zeta_\mathbb{A}(s):=\prod_{n=1}^\infty\frac{1}{1-a_n^{-s}}$.
\end{definition}
The $\exp$ function satisfies $\exp(s)\neq0\forall{s}\in\mathbb{C}\setminus\infty$ and connects by
$\exp(x+y)=\exp(x)\exp(y)$
addition and multiplication and \ref{Universality} implies the continuation
\begin{align*}
\exp\bigr(\mathcal{D}_\mathbb{P}(s)\bigr)&=\exp\Biggr(\sum_{p\in\mathbb{P}}p^{-s}\Biggr)=\prod_{p\in\mathbb{P}}\exp\bigr({p^{-s}}\bigr)=\zeta(s)\prod_{p\in\mathbb{P}}\exp(p^{-s})(1-p^{-s})
\end{align*}
for $\Re(s)>1/2$, where the r.h.s. of this equation gives an analytic continuation if we think of
$\zeta=\mathcal{D}_\mathbb{N}=\zeta_{\mathbb{P}}$
as determined by Riemann's original analytic continuation \cite{RI}.

Hence the zeros and poles of $\zeta=\mathcal{D}_\mathbb{N}(s)$ correspond to logarithmic singularities of the Dirichlet series
$\mathcal{D}_\mathbb{P}(s)=\sum_{p\in\mathbb{P}}1/p^{s}$
known as the prime $\zeta$ function. This reformulation of zeros and poles of $\zeta$ to logarithmic singularities of $\mathcal{D}_\mathbb{P}$ is well-known because for $\Re(s)>0$ by M\"obius inversion
the identities
$$\mathcal{D}_\mathbb{P}(s)=\sum_{n>0}\frac{\mu(n)\ln\bigr(\zeta(ns)\bigr)}{n}$$
$$\ln\bigr(\zeta(s)\bigr)=\sum_{n>0}\frac{{\mathcal{D}_\mathbb{P}}(ns)}{n}$$
hold where $\mu$ is the M\"obius function, but the auxiliary lemma \ref{BIGPRO} seems easier for some purposes.

Considering
$\prod_{n=1}^\infty\exp\left(\mathbf{l}_na_{n}^{-s}\right)=\exp\left(\mathcal{D}^{\mathbf{l}_n}_{\mathbb{A}}(s)\right)$
and then
$\Gamma(s)\sum_{n=1}^\infty\mathbf{l}_n{a}_{n}^{-s}$
we yield under the assumption of an in some sense low Dirichlet density non-zero continuations:
\begin{korollar}\label{Kor1}
Let $\lambda\in\mathbb{R}^+$ and $\{a_n\}_{n\in\mathbb{N}^+}$ be a sequence with $a_{i+1}>a_i\in\mathbb{N}^+$. If
$\sum_{n=1}^\infty{e}^{-\mathbf{l}_na_{n}t}$
is majorized by $t^{-\lambda}$ for $t\rightarrow0$ then the Euler product
$\prod_{n=1}^\infty{1}/({1-\mathbf{l}_na_{n}^{-s}})$
admits a continuation to $\Re(s)>\max(1/2,\lambda)$ with no zeros or singularities.
\end{korollar}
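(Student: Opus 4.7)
The plan is to split the Euler product into two factors, each continued by a different mechanism. The elementary identity $\frac{1}{1-x}=e^{x}\cdot\frac{e^{-x}}{1-x}$, applied term by term, gives
$$\prod_{n=1}^{\infty}\frac{1}{1-\mathbf{l}_na_n^{-s}}=\exp\bigl(\mathcal{D}^{\mathbf{l}_n}_{\mathbb{A}}(s)\bigr)\cdot\prod_{n=1}^{\infty}\frac{e^{-\mathbf{l}_na_n^{-s}}}{1-\mathbf{l}_na_n^{-s}},$$
so it is enough to continue $\mathcal{D}^{\mathbf{l}_n}_{\mathbb{A}}$ holomorphically to $\Re(s)>\lambda$ and to show that the second, ``corrected'' Euler product defines a nonvanishing holomorphic function on $\Re(s)>1/2$.

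The corrected product is exactly what Theorem \ref{Uni} is designed for, applied with $g_n(x):=1-\tfrac{e^{-\mathbf{l}_nx}}{1-\mathbf{l}_nx}$. A quick Taylor expansion shows that $g_n$ vanishes to order $2$ at the origin, uniformly in $\mathbf{l}_n\in[-1,1]$, so a single constant $C$ yields $|g_n(x)|\leq C|x|^{2}$ on a neighbourhood of $0$. Theorem \ref{Uni} with exponent $2$ then delivers absolute convergence on $\Re(s)>1/2$ to a value in $\mathbb{C}\setminus\{0,\infty\}$; the summability assumption $\sum 1/a_n^{1+\epsilon}<\infty$ used in that theorem is automatic because $a_n\in\mathbb{N}^+$ is strictly increasing, hence $a_n\geq n$.

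For the exponential factor I follow the Mellin-transform hint already sketched just before the statement. Starting from $\Gamma(s)a_n^{-s}=\int_0^{\infty}t^{s-1}e^{-a_nt}\,dt$ and exchanging sum and integral, one expects
$$\Gamma(s)\,\mathcal{D}^{\mathbf{l}_n}_{\mathbb{A}}(s)=\int_0^{\infty}t^{s-1}\Biggl(\sum_{n=1}^{\infty}\mathbf{l}_ne^{-a_nt}\Biggr)dt.$$
Splitting at $t=1$, the tail $\int_1^{\infty}$ is entire in $s$ by exponential decay in $t$, while on $(0,1]$ the hypothesis $\sum\mathbf{l}_ne^{-a_nt}=O(t^{-\lambda})$ as $t\to 0^{+}$ makes the remaining integral absolutely convergent precisely when $\Re(s)>\lambda$. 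Dividing by the nowhere-vanishing $1/\Gamma(s)$ extends $\mathcal{D}^{\mathbf{l}_n}_{\mathbb{A}}$ holomorphically to $\Re(s)>\lambda$, and therefore $\exp(\mathcal{D}^{\mathbf{l}_n}_{\mathbb{A}}(s))$ is holomorphic and nonzero there. Multiplying the two factors yields a zero- and pole-free continuation on $\Re(s)>\max(1/2,\lambda)$, which is exactly the corollary.

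The one step I expect to require real care is the sum-integral exchange in the Mellin identity: the hypothesis only controls the signed sum $\sum\mathbf{l}_ne^{-a_nt}$, not $\sum|\mathbf{l}_n|e^{-a_nt}$, so a naive Fubini argument is unavailable. The safe strategy is to establish the identity first on $\Re(s)>1$, where the Dirichlet series and the integrand converge absolutely and the exchange is elementary, then to take the right-hand side as the \emph{definition} of the continuation on $\Re(s)>\lambda$ and verify holomorphy by Morera or by differentiation under the integral, with the bound $O(t^{-\lambda})$ supplying the required dominating integrand on every compact subset of the half-plane.
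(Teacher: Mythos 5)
Your argument is exactly the one the paper sketches in the two lines preceding the statement: factor out $\exp(\mathcal{D}^{\mathbf{l}_n}_{\mathbb{A}})$, continue it via the Mellin integral $\Gamma(s)\sum\mathbf{l}_n a_n^{-s}=\int_0^\infty t^{s-1}\bigl(\sum\mathbf{l}_n e^{-a_n t}\bigr)\,\d t$ using the $O(t^{-\lambda})$ hypothesis, and handle the corrected product by Theorem \ref{Uni}. The paper leaves the details to the reader; you have filled them in correctly, including the genuinely delicate point about justifying the sum--integral exchange beyond the region of absolute convergence.

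Two small remarks. First, a notational slip in the application of Theorem \ref{Uni}: with your $g_n(x)=1-\frac{e^{-\mathbf{l}_n x}}{1-\mathbf{l}_n x}$ one gets $\frac{1}{1-g_n(x)}=(1-\mathbf{l}_n x)e^{\mathbf{l}_n x}$, which is the \emph{reciprocal} of the factor $\frac{e^{-\mathbf{l}_n x}}{1-\mathbf{l}_n x}$ you wish to produce; the correct choice is $g_n(x)=1-(1-\mathbf{l}_n x)e^{\mathbf{l}_n x}$. Since both functions vanish to order $2$ at the origin uniformly in $\mathbf{l}_n\in[-1,1]$, and since the proof of Theorem \ref{Uni} explicitly records that the reciprocal product converges under the same hypotheses, the conclusion is unaffected, but the formula should be inverted. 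Second, you tacitly read the paper's hypothesis ``$\sum_{n}e^{-\mathbf{l}_n a_n t}$ majorized by $t^{-\lambda}$'' as ``$\sum_{n}\mathbf{l}_n e^{-a_n t}$ majorized by $t^{-\lambda}$''; that is the version that feeds the Mellin transform and matches the use made in Proposition \ref{Induction} (where all $\mathbf{l}_n=1$), so it is almost certainly the intended reading, but it is worth flagging that you are correcting an apparent typographical error rather than taking the statement literally.
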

 It is also natural to consider for example the deformation
$\mathcal{D}^z_{\mathbb{P}}(s):=\sum_{p\in\mathbb{P}}z^p/p^s$.
Clearly we have the identity
$\mathcal{D}^1_{\mathbb{P}}(s)=\mathcal{D}_{\mathbb{P}}(s)$
and moreover this deformation converges as usual for $\vert{z}\vert<1$ and $\Re(s)>0$ majorized by the polylogarithm function.
\section{\textit{At $\infty$ alternating Euler products and Leibniz division}}\label{Alternatingsection}
Consider the set $\mathbb{P}$ of primes endowed with the natural order $p_i\leq{p}_j$ if $i\leq{j}$, for instance
we set
$$\{p_1,p_2,p_3,p_4,p_5,p_6,p_7,p_8,p_{9},\dots\}=\{2,3,5,7,11,13,17,19,23,\dots\}$$
\begin{proposition}\label{Approximation}
Let $N\in\mathbb{N}^+$ be a natural number and consider a sequence of numbers $\{\mathbf{l}^N_n\}\in[1,-1]$ with $\mathbf{l}^N_n=\pm(-1)^n$ if $N\leq{n}$ and a sequence of integers $\mathbb{A}=\{a_n\}_{n\in\mathbb{N}^+}$ with $2\leq{a_1}<\cdots<{a}_n<{a}_{n+1}\in\mathbb{N}^+$. The Euler product
$\zeta^{\mathbf{l}^N_n}_{\mathbb{A}}(s)$
has an analytic continuation to $\Re(s)>1/2$ and no zeros or singularities in this domain.
\end{proposition}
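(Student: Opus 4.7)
The strategy is to reduce to Theorem~\ref{Uni} by peeling off the only conditionally summable first-order contribution into a separate exponential factor, whose analyticity will then come from the alternating sign structure. The factors with $n<N$ form a finite product whose individual terms $1/(1-\mathbf{l}^N_n a_n^{-s})$ are analytic and non-vanishing on $\Re(s)>0$, since $|\mathbf{l}^N_n a_n^{-s}|\leq a_n^{-\Re(s)}<1$ there; so it suffices to control the tail $n\geq N$.

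For that tail, I would use the factorisation
\[
\frac{1}{1-\mathbf{l}^N_n a_n^{-s}}=\exp\bigl(\mathbf{l}^N_n a_n^{-s}\bigr)\cdot\frac{1}{1-G_n(a_n^{-s})},\qquad G_n(x):=1-(1-\mathbf{l}^N_n x)\exp(\mathbf{l}^N_n x).
\]
A direct Taylor expansion gives $G_n(x)=\tfrac{1}{2}(\mathbf{l}^N_n)^2x^2+O(x^3)$, so a uniform bound $|G_n(x)|\leq C|x|^2$ for $|x|$ sufficiently small is immediate; moreover $G_n(x)=1$ would force $\mathbf{l}^N_n x=1$, impossible for $x\in\mathbb{D}_1(0)$ because $|\mathbf{l}^N_n|\leq 1$. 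Theorem~\ref{Uni} therefore applies with $\lambda=2$ and yields absolute convergence of $\prod_{n\geq N}1/(1-G_n(a_n^{-s}))$ on $\Re(s)>1/2$ to a non-zero analytic value.

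It remains to make sense of $\exp\bigl(\sum_{n\geq N}\mathbf{l}^N_n a_n^{-s}\bigr)$, and this is the single point where the alternating hypothesis is indispensable. Writing $\mathbf{l}^N_n=\epsilon(-1)^n$ for $n\geq N$ with a fixed sign $\epsilon\in\{\pm 1\}$, the partial sums $T_M:=\sum_{n=N}^M(-1)^n$ are bounded by $1$, so Abel summation gives
\[
\sum_{n=N}^{M}\epsilon\,(-1)^n a_n^{-s}=\epsilon\,T_M a_M^{-s}+\epsilon\sum_{n=N}^{M-1}T_n\bigl(a_n^{-s}-a_{n+1}^{-s}\bigr),
\]
and the telescoping estimate $\sum_n|a_n^{-s}-a_{n+1}^{-s}|\leq |s|\int_{a_N}^{\infty}t^{-\Re(s)-1}\,dt$ is finite and uniformly bounded on compact subsets of $\Re(s)>0$. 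This yields uniform convergence of the alternating series to a function analytic on $\Re(s)>0$, whose exponential is analytic and nowhere zero there, in particular on $\Re(s)>1/2$. Multiplying the three pieces together completes the proof.

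The main obstacle is precisely this third step: the series $\sum\mathbf{l}^N_n a_n^{-s}$ is only conditionally convergent for $0<\Re(s)\leq 1$ in general, so no absolute-convergence machinery — and in particular not a direct application of Theorem~\ref{Uni} — can handle it; one genuinely needs the $\pm(-1)^n$ pattern together with Abel summation to obtain both convergence and analyticity on a half-plane strictly larger than $\Re(s)>1/2$. Everything else is routine once that exponential factor has been peeled off.
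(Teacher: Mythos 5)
Your proposal is correct and follows essentially the same route as the paper: the paper also factors $\zeta^{\mathbf{l}^N_n}_{\mathbb{A}}(s)$ as $\exp\bigl(\mathcal{D}^{\mathbf{l}^N_n}_{\mathbb{A}}(s)\bigr)$ divided by $\prod_n(1-\mathbf{l}^N_n a_n^{-s})\exp(\mathbf{l}^N_n a_n^{-s})$, handles the latter by absolute convergence for $\Re(s)>1/2$ (via Corollary~\ref{Universality}, which is exactly Theorem~\ref{Uni} with $f(x)=(1-x)e^{x}$, i.e.\ your $1-G_n$), and handles the former by the Leibniz/Abel summation argument on $\Re(s)>0$. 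Your invocation of Theorem~\ref{Uni} directly rather than through the corollary, and your explicit spelling-out of the Abel estimate, are presentational differences only.
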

\begin{proof}[Proof]
First we consider the function
$\exp\left(\mathcal{D}^{\mathbf{l}_n^N}_{\mathbb{A}}(s)\right)$. A finite amount of changes in a sum does not change its convergence. By Leibniz convergence argument for alternating series it is clear that the series $\sum_{n=1}^\infty{\mathbf{l}_n^N}{a_n^{-s}}$ converges for every real $s\in(0,\infty]$ and by Abel summation it is well-known that this implies that it converges for $\Re(s)>0$. By \ref{Universality} this implies for $\Re(s)>1/2$ that the Euler product $\zeta^{\mathbf{l}^N_n}_{\mathbb{A}}$ has no zeros or poles in the half-plane $\Re(s)>1/2$.\end{proof}
\begin{proposition}\label{Induction}
Let $\mathbb{A}=\{a_n\}_{n\in\mathbb{N}^+}$ be a set of integers $\geq2$ and $\mathbb{A}'=\{a'_n\}_{n\in\mathbb{N}^+}\subset\mathbb{A}$ be a subset with $\sum_{n=1}^\infty{e}^{-a'_{n}t}$ majorized by $t^{-1/2}$ for $t\rightarrow0$. Consider $\tilde{\mathbb{A}}:=\mathbb{A}\setminus\mathbb{A}'=\{\tilde{a}_n\}_{n\in\mathbb{N}^+}$.
If
$\mathcal{D}_{\{\tilde{a}_{2^i{n}+j}\}_{n\in\mathbb{N}^+}}(s)$
for some $i,j\in\mathbb{N}$ admits a non-singular analytic continuation to some domain
$\mathcal{U}$ then we also have
$\zeta_{\mathbb{A}}(s)\neq0\forall{s}\in\mathcal{U}\cap\{s\vert\Re(s)>1/2\}$.
\end{proposition}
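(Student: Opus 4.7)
The plan is to split $\zeta_{\mathbb{A}}=\zeta_{\mathbb{A}'}\cdot\zeta_{\tilde{\mathbb{A}}}$ and handle each factor separately. Corollary~\ref{Kor1}, applied to $\mathbb{A}'$ with $\lambda=1/2$ and all signs $\mathbf{l}_n=1$, already gives a zero-free analytic continuation of $\zeta_{\mathbb{A}'}$ to $\Re(s)>1/2$. It is therefore enough to exhibit a non-vanishing analytic continuation of $\zeta_{\tilde{\mathbb{A}}}$ to $\mathcal{U}\cap\{\Re(s)>1/2\}$.

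For the latter I would work with the logarithm. On $\Re(s)>1$ one has
$$\log\zeta_{\tilde{\mathbb{A}}}(s)=\sum_{k=1}^{\infty}\frac{\mathcal{D}_{\tilde{\mathbb{A}}}(ks)}{k}.$$
Since $\tilde{a}_n\geq n+1$, the tail $\sum_{k\geq 2}\mathcal{D}_{\tilde{\mathbb{A}}}(ks)/k$ is absolutely and locally uniformly convergent on $\Re(s)>1/2$, hence analytic there. The entire task thus reduces to showing that the leading term $\mathcal{D}_{\tilde{\mathbb{A}}}(s)$ extends analytically to $\mathcal{U}\cap\{\Re(s)>0\}$; exponentiating the resulting logarithm then produces the required non-vanishing continuation of $\zeta_{\tilde{\mathbb{A}}}$.

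To deduce this continuation of $\mathcal{D}_{\tilde{\mathbb{A}}}$ from the hypothesis on $\mathcal{D}_{\{\tilde{a}_{2^{i}n+j}\}}$, I would argue by induction on $i$. Write $B^k_r:=\{\tilde{a}_{2^kn+r}\}_{n\in\mathbb{N}^+}$. The base case $i=0$ is immediate because $\mathcal{D}_{B^0_j}$ and $\mathcal{D}_{\tilde{\mathbb{A}}}$ agree up to finitely many entire summands. For the inductive step, the (almost-)partition $B^{k-1}_r=B^k_r\sqcup B^k_{r+2^{k-1}}$, which is accurate up to a single boundary element, yields
$$\mathcal{D}_{B^{k-1}_r}(s)=\mathcal{D}_{B^k_r}(s)+\mathcal{D}_{B^k_{r+2^{k-1}}}(s)+R_1(s),$$
and putting the sign $(-1)^m$ on the $m$-th element of $B^{k-1}_r$ gives the companion identity
$$\mathcal{D}^{(-1)^m}_{B^{k-1}_r}(s)=\mathcal{D}_{B^k_r}(s)-\mathcal{D}_{B^k_{r+2^{k-1}}}(s)+R_2(s),$$
with $R_1,R_2$ finite exponential sums, hence entire. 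By the Leibniz--Abel reasoning recalled in the proof of Proposition~\ref{Approximation}, the alternating Dirichlet series $\mathcal{D}^{(-1)^m}_{B^{k-1}_r}$ converges on $\Re(s)>0$ and is analytic there. Therefore the analyticity of $\mathcal{D}_{B^k_j}$ on $\mathcal{U}$ forces analyticity of $\mathcal{D}_{B^k_{j+2^{k-1}}}$, and hence of $\mathcal{D}_{B^{k-1}_{j\bmod 2^{k-1}}}$, on $\mathcal{U}\cap\{\Re(s)>0\}$. Feeding this into the inductive hypothesis at level $k-1$, now on the shrunken open set $\mathcal{U}\cap\{\Re(s)>0\}$, completes the descent down to $\mathcal{D}_{\tilde{\mathbb{A}}}$.

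The main obstacle is not really analytic but combinatorial: one has to bookkeep the boundary terms $R_1,R_2$ arising from the mismatch between the ranges $n\geq 0$ and $n\geq 1$, as well as the representative of the residue class at each level, so that the descent actually lands at $\mathcal{D}_{\tilde{\mathbb{A}}}$ rather than at a shifted variant. The extra alternating signs $(-1)^m$ introduced along the way are exactly the sign patterns $\mathbf{l}_n=\pm(-1)^n$ allowed by Proposition~\ref{Approximation}, so that nothing escapes the class of functions analytic on $\mathcal{U}\cap\{\Re(s)>0\}$.
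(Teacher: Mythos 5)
Your proposal is correct and follows essentially the same strategy as the paper: peel off $\mathbb{A}'$ via Corollary~\ref{Kor1}, then use the Leibniz--Abel analyticity of the alternating Dirichlet series (Proposition~\ref{Approximation}) together with the even/odd-index splitting to propagate the assumed continuation from $\mathcal{D}_{\{\tilde{a}_{2^in+j}\}}$ back down to $\mathcal{D}_{\tilde{\mathbb{A}}}$. The only cosmetic differences are that you work with $\log\zeta_{\tilde{\mathbb{A}}}$ (via the $\sum_k\mathcal{D}_{\tilde{\mathbb{A}}}(ks)/k$ expansion) instead of $\exp(\mathcal{D}_{\tilde{\mathbb{A}}})$, and that you spell out the finite induction on $i$ that the paper compresses into ``now continue to split the appearing products inductively.''
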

\begin{proof}[Proof] First we can use corollary \ref{Kor1} to exclude a subset $\mathbb{A}'\subset\mathbb{A}$ with low Dirichlet density, consider the natural order on 
$\tilde{\mathbb{A}}$ and proceed in the following way:

By \ref{Approximation}
$\exp\left(\sum_{n=1}^\infty{(-1)^n}/{\tilde{a}_n^s}\right)\neq0,\infty$
for $\Re(s)>0$ and if $\Re(s)>1$ this function equals
\vspace{-0.17cm}$$\vspace{-0.17cm}{\exp\Biggr(\sum_{n=1}^\infty{\tilde{a}_{2n}^{-s}}\Biggr)}\Bigg/{\exp\Biggr(\sum_{n=0}^\infty{\tilde{a}_{2n+1}^{-s}}\Biggr)}$$
If $\exp\bigr(\sum_{n=1}^\infty{\tilde{a}_{2n}^{-s}}\bigr)$ or $\exp\bigr(\sum_{n=0}^\infty{\tilde{a}_{2n+1}^{-s}}\bigr)$ admits a non-singular continuation except at $s=1$ and with no zero for $\Re(s)>\frac{1}{2}$ we can conclude by the well-behaved quotient that both functions admit continuations for $\Re(s)>{1}/{2}$ without singularities or zeros for $s\neq1,\Re(s)>{1}/{2}$, hence
$$\exp\bigr(\mathcal{D}_{\tilde{\mathbb{A}}}(s)\bigr)=\exp\left(\sum_{n=1}^\infty{\tilde{a}_{2n}^{-s}}\right)\exp\left(\sum_{n=1}^\infty{\tilde{a}_{2n-1}^{-s}}\right)\neq0,\infty$$
for $s\neq1,\Re(s)>\frac{1}{2}$. Now continue to split the appearing products inductively.
\end{proof}
\begin{korollar}\label{PrimeSplit}
Let $\tilde{\mathbb{P}}:=\mathbb{P}\setminus\mathbb{P}'$ where $\mathbb{P}'\subset\mathbb{P}$ is a subset of the primes with $\sum_{n=1}^\infty{e}^{-p'_{n}t}$ majorized by $t^{-1/2}$ for $t\rightarrow0$.
 If there exist $i,j\in\mathbb{N}$ so that
$\mathcal{D}_{\{\tilde{p}_{2^i{n}+j}\}_{n\in\mathbb{N}}}(s)$
admits a non-singular continuation to some domain
$\mathcal{U}$ then we also have
$\zeta(s)\neq0\forall{s}\in\mathcal{U}\cap\{s\vert\Re(s)>1/2\}$.
\end{korollar}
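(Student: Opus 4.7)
The plan is to observe that Corollary \ref{PrimeSplit} is nothing more than Proposition \ref{Induction} specialized to the choice $\mathbb{A}=\mathbb{P}$, the set of primes. Since every prime is an integer $\geq 2$, $\mathbb{P}$ satisfies the standing hypotheses on the set $\mathbb{A}$ in that proposition. Taking $\mathbb{A}'=\mathbb{P}'$ gives $\tilde{\mathbb{A}}=\tilde{\mathbb{P}}$, and the relabeling $\tilde{a}_n=\tilde{p}_n$ makes the two statements align term-by-term.

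Next I would check that the hypotheses transfer verbatim. The density bound $\sum_n e^{-p'_n t}\ll t^{-1/2}$ as $t\to 0$ is exactly the condition used inside the proof of Proposition \ref{Induction} to invoke Corollary \ref{Kor1} and discard the contribution of the exceptional subset in the half-plane $\Re(s)>1/2$. The second hypothesis, that $\mathcal{D}_{\{\tilde{p}_{2^i n+j}\}_n}$ admits a non-singular analytic continuation to the domain $\mathcal{U}$, is literally the same requirement as in the proposition. Hence Proposition \ref{Induction} immediately yields $\zeta_{\mathbb{P}}(s)\neq 0$ for every $s\in\mathcal{U}\cap\{\Re(s)>1/2\}$.

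To conclude, the only remaining point is to identify $\zeta_{\mathbb{P}}$ with Riemann's $\zeta$: by Euler's identity \ref{ZEP} we have $\zeta_{\mathbb{P}}(s)=\prod_{p\in\mathbb{P}}1/(1-p^{-s})=\zeta(s)$ in the region of absolute convergence, and this identification persists under analytic continuation. Therefore $\zeta(s)\neq 0$ on $\mathcal{U}\cap\{\Re(s)>1/2\}$, as claimed. There is no genuine obstacle here beyond this bookkeeping; the corollary is a straightforward specialization of the preceding proposition, all the analytic content having already been absorbed in the proofs of Theorem \ref{Uni}, Corollaries \ref{Universality} and \ref{Kor1}, and Propositions \ref{Approximation} and \ref{Induction}.
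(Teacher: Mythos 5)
Your proposal is correct and follows the same route as the paper: Corollary \ref{PrimeSplit} is obtained by specializing Proposition \ref{Induction} to $\mathbb{A}=\mathbb{P}$, $\mathbb{A}'=\mathbb{P}'$, and identifying $\zeta_{\mathbb{P}}$ with Riemann's $\zeta$ via Euler's product \ref{ZEP}. The paper merely adds an illustrative example (formula \ref{Split} and the binary tree \ref{tree}) showing how the inductive splitting propagates, but the logical content of your argument is the same.
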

\begin{proof}[Proof] The corollary \ref{PrimeSplit} is obviously a direct consequence of the previous proposition \ref{Induction}. For example if the function $\exp\bigr(\sum_{n=1}^\infty{\tilde{p}_{2n}^{-s}}\bigr)$ admits a non-singular analytic continuation except at $s=1$ and with no zero for $\Re(s)>{1}/{2}$ we just set
\vspace{-0.17cm}\begin{equation}\label{Split}
\vspace{-0.17cm}\exp\bigr(\mathcal{D}_{\tilde{\mathbb{P}}}(s)\bigr)=\exp\Biggr(-\sum_{n=1}^\infty{(-1)^n}{\tilde{p}_n^{-s}}\Biggr)\exp^2\Biggr(\sum_{n=1}^\infty{\tilde{p}_{2n}^{-s}}\Biggr)
\end{equation}
Graphically the splitting of order two of the original Euler product $\zeta$ is represented by the following rooted perfect binary tree
\hspace{-0.8cm}\begin{align}\label{tree}
\xymatrix{&&\{\tilde{p}_{n}\}\ar[dl]\ar[dr]&&&\\
&\{\tilde{p}_{2n}\}\ar[dl]\ar[dr]&&\{\tilde{p}_{2n+1}\}\ar[dl]\ar[dr]&&\\
\{\tilde{p}_{2^2n}\}&&\{\tilde{p}_{2^2n+2}\}\quad\{\tilde{p}_{2^2n+1}\}&&\{\tilde{p}_{2^2n+3}\}\\
}\end{align}
If for example $\exp\bigr(\sum_{n=1}^\infty{\tilde{p}_{2^2n}^{-s}}\bigr)$ admits an analytic continuation we first continue
$\exp\bigr(\sum_{n=1}^\infty{\tilde{p}_{2n}^{-s}}\bigr)$ by
$\exp\left(-\sum_{n=1}^\infty{(-1)^n}{\tilde{p}_{2n}^{-s}}\right)\exp^2\left(\sum_{n=1}^\infty{\tilde{p}_{2^2n}^{-s}}\right)$
and then again $\exp\bigr(\mathcal{D}_{\tilde{\mathbb{P}}}(s)\bigr)$ with formula \ref{Split}.\end{proof}
In fact under the assumption that one of the leaves of the tree \ref{tree} admits a continuation we can continue by Leibniz convergence argument also all other leaves analogous to \ref{Split}.  Notice the Leibniz division
$A\bullet{B}:=A/B$
of two Euler products $A=\exp\left(\mathcal{D}_{\{\tilde{p}_{2^i{n}+j}\}_{n\in\mathbb{N}}}(s)\right)$ and $B=\exp\left(\mathcal{D}_{\{\tilde{p}_{2^i{n}+j'}\}_{n\in\mathbb{N}}}(s)\right)$ results in
$$A\bullet{B}=\exp\left(\mathcal{D}_{\{\tilde{p}_{2^i{n}+j}\}_{n\in\mathbb{N}}}(s)-\mathcal{D}_{\{\tilde{p}_{2^i{n}+j'}\}_{n\in\mathbb{N}}}(s)\right)$$
hence a for $\Re(s)>0$ well-defined continuation, but this binary operation $\bullet$ is for example only linear in $A$, non-commutative
$A\bullet{B}=(B\bullet{A})^{-1}$
and non-associative
$$A\bullet(B\bullet{C})-(A\bullet{B})\bullet{C}=\frac{A}{B}\left(C-\frac{1}{C}\right)$$
Also for the skew-symmetrization
$[A,B]_-^\bullet:=A\bullet{B}\pm{B}\bullet{A}=A/B\pm{B}/A$
we have a Jacobi identity defect
$$[[A,B]^\bullet_-,C]^\bullet_-+[[B,C]^\bullet_-,A]^\bullet_-+[[C,A]^\bullet_-,B]^\bullet_-=ABC\Bigr/\left(\frac{-1}{A^2-B^2}+\frac{-1}{B^2-C^2}+\frac{-1}{C^2-A^2}\right)$$
Notice the previous expression unfortunately does not serve as a justified well-defined analytic continuation for $\Re(s)>0$ by the Leibniz division argument: The domain where $\bullet$ extends to $\Re(s)>0$ in the sense of \ref{Approximation} consists of pairs $\left(\mathcal{D}_\mathbb{A},\mathcal{D}_\mathbb{B}\right)$ of positive and at $\infty$ interlacing series in the sense that there exists $i,j\in\mathbb{N}$ with
$a_i<b_j<a_{i+1}<b_{j+1}<a_{j+2}<b_{j+2}\cdots$.

It might be far stretched but seems also a bit suspicious that the number of rooted perfect binary trees with $n$ leaves is well-known to be given by the Catalan numbers
$C_{n}=\frac{1}{n}\binom{2n-2}{n-1}=\frac{1}{n!}\frac{\partial^{n}}{\partial{z}^{n}}\frac{1-\sqrt{1-4z}}{2}\Big\vert_0\;\forall\;{n}\in\mathbb{N}^+$
appearing in many combinatorial problems.\\

Let again $\tilde{\mathbb{P}}:=\mathbb{P}\setminus\mathbb{P}'$ where $\mathbb{P}'\subset\mathbb{P}$ is a subset of the primes with $\sum_{n=1}^\infty{e}^{-p'_{n}t}$ majorized by $t^{-1/2}$ for $t\rightarrow0$. If there exist $i,j\in\mathbb{N}$ so that
$$\prod_{m=1}^\infty\frac{1}{1-\tilde{p}^{-s}_{2^{a}{m}+b}}$$
admits an analytic continuation to $\Re(s)>1/2$ with no singularities for $s\neq1$ the order of every non trivial zero of $\zeta$ with real part $>1/2$ is at least $i$.
The way in which we continue the Euler products implies this statement, because it is well-known that the vanishing order of $\prod_{m=1}^\infty1/(1-\tilde{p}^{-s}_{2^{i}{m}+j})$ would be an integer $\geq1$ if we assume the existence of an analytic continuation.

There has been a lot of research on the continuation of Taylor and Dirichlet series, some of the established classical theorems are Hadamard's gap theorem and P\'olya's theorem relying on it. Also the more recent work of Bhowmik, Essouabri, Lichtin  Matsumoto and Schlage-Puchta and related results on the natural boundaries of Dirichlet series are interesting in this context, we refer the reader for further considerations to the articles \cite{BSP},\cite{BMa} and \cite{BEL}.\\

Consider the sequences where $\{\mathbf{l}^N_n\}=1$ if $1\leq{n}\leq{N-1}$ and $\mathbf{l}^N_n=(-1)^n$ if $N\leq{n}$. Unfortunately only for $\Re(s)>1$ it is easy to verify uniform convergence of $\zeta^{\mathbf{l}^N_n}_{\mathbb{P}}$ for $N\rightarrow\infty$:

We clearly have
$$(\zeta-\zeta^{\mathbf{l}^N_n})_{\mathbb{P}}(s)=\sum_{n=p_N}^\infty{a_n}/n^s$$
with $a_n\in\{0,2\}$ and can estimate this sum as usual by an integral:
$$\left(\zeta-\zeta^{\mathbf{l}^N_n}_{\mathbb{P}}\right)(s)\leq\sum_{n=p_N}^\infty\frac{2}{n^{\Re(s)}}$$
$$\leq2\int_{p_N}^\infty\frac{\\\d{t}}{t^{\Re(s)}}$$
$$=2\frac{t^{1-\Re(s)}}{1-\Re(s)}\Big\vert^\infty_{p_N}$$
$$=\frac{2{p_N}^{1-\Re(s)}}{\Re(s)-1}$$
It seems reasonable that $\zeta^{\mathbf{l}^N_n}_{\mathbb{P}}$ with $\mathbf{l}^N_n\in\{1,-1\}$ also satisfy the Voronin universality.
\section{The connection of the prime $\zeta$ function and Goldbach-Waring like representations}
The Goldbach conjecture states that all even natural numbers $n$ can be represented as the sum of two primes and more general for integers $i\geq1,m\geq2$ it is interesting to consider so-called \textit{\textbf{Goldbach-Waring representations}}
$${p_1^i+\cdots+{p}_m^i}=n$$
where $p_1,\cdots,p_m$ are prime numbers.

It is not very surprising that the prime $\zeta$ function
$\mathcal{D}_\mathbb{P}(s)$
is related to the Goldbach-Waring representations, in fact it in some sense generates all of them by considering certain Mellin transforms:

Let $1>\nu\in\mathbb{R}$ be a real number and set
$\tilde{\mathbb{P}}:=\mathbb{P}\setminus\mathbb{P}'$
where $\mathbb{P}'\subset\mathbb{P}$ is a subset of the primes with $\sum_{n=1}^\infty{e}^{-p'_{n}t}$ majorized by $t^{-\nu}$ for $t\rightarrow0$. For $\Re(s)>1$ we have by majorized convergence as usual the identity
$$\Gamma(s/k)\mathcal{D}_{\{\tilde{p}_{2^i{n}+j}\}_{n\in\mathbb{N}}}(s)=\int_{0}^\infty\frac{\\\d{t}}{t}t^{{s}/{k}}g_k\bigr(\exp(-t)\bigr)$$
where the functions $g_k:\mathbb{R}^+\rightarrow\mathbb{R}^+$ are defined by
$$g_k(x):=\sum_{\tilde{p}\in\{\tilde{p}_{2^i{n}+j}\}_{n\in\mathbb{N}}}{x}^{\tilde{p}^k}$$
We also have the quite obvious calculation
\begin{align*}
g_k^{m}(x)&=\hspace{-3cm}\sum_{\hspace{3cm}\bigr(\tilde{p}_{2^i{n_1}+j},\cdots,\tilde{p}_{2^i{n_m}+j}\bigr)\in\{\tilde{p}_{2^i{n}+j}\}_{n\in\mathbb{N}}^{m}}\hspace{-3cm}{x}^{\tilde{p}_{2^i{n_1}+j}^k+\cdots+\tilde{p}_{2^i{n_m}+j}^k}\\&=\sum^\infty_{n=2^{i}m}x^n\sum_{{\tilde{p}_{2^i{n_1}+j}^k+\cdots+\tilde{p}_{2^i{n_m}+j}^k}=n}1
\end{align*}
The pole of $\zeta$ at $1$ indicates that there have to be in some sense quite many Goldbach-Waring like representations:

Suppose that for some $m$ the number of Goldbach-Waring like representations
$${\sum_{{\tilde{p}_{2^i{n_1}+j}^k+\cdots+\tilde{p}_{2^i{n_m}+j}^k}=n}1}$$
is bounded by some $B$ for all $n$ then the previous equation implies that $g_k(x)$ would be majorized by $(1-x)^{1/m}$. If we in addition to a bounded number of Goldbach-Waring like representations assume that this holds for some $m>k$ we could continue $\zeta$ without singularities to $\Re(s)>k/m$, but this contradicts the pole of $\zeta$ at $1$, hence there can be no bound $B$ for the Goldbach-Waring like representations if we suppose $m>k$.

It seems reasonable that the previous statement on Goldbach-Waring like representation bounds did already appear in the literature and maybe the statement has been already advanced by other methods or can be deduced by heuristic arguments. There seems to be also an analogy of this estimations with some results of Korevaar.

\subsection*{\textit{References}}

\vfill{\begin{center}
\textcircled{c} Copyright by Johannes L\"offler, 2014, All Rights Reserved
\end{center}}

\end{document}